\begin{document}

\newtheorem{theorem}{Theorem}[section]
\newtheorem{lemma}[theorem]{Lemma}
\newtheorem{proposition}[theorem]{Proposition}
\newtheorem{corollary}[theorem]{Corollary}

\newenvironment{proof}[1][Proof]{\begin{trivlist}
\item[\hskip \labelsep {\bfseries #1}]}{\end{trivlist}}
\newenvironment{definition}[1][Definition]{\begin{trivlist}
\item[\hskip \labelsep {\bfseries #1}]}{\end{trivlist}}
\newenvironment{example}[1][Example]{\begin{trivlist}
\item[\hskip \labelsep {\bfseries #1}]}{\end{trivlist}}
\newenvironment{remark}[1][Remark]{\begin{trivlist}
\item[\hskip \labelsep {\bfseries #1}]}{\end{trivlist}}
\newenvironment{question}[1][Question]{\begin{trivlist}
\item[\hskip \labelsep {\bfseries #1}]}{\end{trivlist}}

\newcommand{\qed}{\nobreak \ifvmode \relax \else
\ifdim\lastskip<1.5em \hskip-\lastskip
\hskip1.5em plus0em minus0.5em \fi \nobreak
\vrule height0.75em width0.5em depth0.25em\fi}

 % Please minimize the usage of "newtheorem", "newcommand", and use
 % equation numbers only situation when they provide essential convenience
 % Try to avoid defining your own macros

\title{On the persistence properties of the cross-coupled Camassa-Holm system}

% Place all authors' names in [ ] shown as running head;
% No more than 40 letters. Leave { } empty
% Please use `and' to connect the last two names if appliable

% Enter the first author's name and address:
\author{David Henry \\ {\small School of Mathematical Sciences, University College Cork, Cork, Ireland}
\\
Darryl D. Holm \\ {\small Mathematics Department, Imperial College
London, SW7 2AZ, UK}
\\
Rossen Ivanov \\ {\small School of Mathematical Sciences, Dublin
Institute of Technology},\\ {\small Kevin Street, Dublin 8,
Ireland}
} % Do not forget to end the {\footnotesize by the sign }

\date{}
% The name of the associate editor will be entered by an editorial staff

\maketitle

%The abstract of your paper

\begin{abstract}
\noindent In this paper we examine the evolution of solutions, that initially have compact
support,  of a recently-derived system \cite{CHIP} of cross-coupled Camassa-Holm equations.
The analytical methods which we employ provide a full picture for the persistence of compact support for the momenta. For  solutions of the system itself, the answer is more convoluted, and we determine when the compactness of the support is lost, replaced instead by an exponential decay rate.
\end{abstract}

\section{Introduction}\indent
This paper is concerned with the persistence of compact support in solutions to
a recently derived cross-coupled Camassa-Holm (CCCH) equation
\cite{CHIP}, which is given by \begin{subequations}\label{sysa}
\begin{align}
m_t+2v_xm+vm_x&=0 \label{CH1} \\
n_t+2u_xn+un_x&=0, \label{CH2},
\end{align}
\end{subequations}
where $m=u-u_{xx}$ and $n=v-v_{xx}$. This system generalises the
celebrated Camassa-Holm (CH) equation \cite{Cam}, since for $u=v$
the system \eqref{sys} reduces to two copies of the CH equation

$$ m_t+2u_xm+um_x=0. $$
The CH equation models a variety of phenomena, including the
propagation of unidirectional shallow water waves over a flat bed
\cite{Cam,DGH,HoIv,John,Iv1}. The CH equation possesses a very
rich structure, being an integrable infinite-dimensional
Hamiltonian system with a bi-Hamiltonian structure and an infinity
of conservation laws \cite{Cam,ConGerIv,Iv2}. It also has a
geometric interpretation as a re-expression of the geodesic flow
on the diffeomorphism group of the circle \cite{HoMaRa1998}. One
of the most interesting features of the CH equation, perhaps, is
the rich variety of solutions it admits. Some solutions exist
globally, whereas others exist only for a finite length of time,
modelling wave breaking \cite{ConMcK,ConEschActa}.

The CCCH equation can be derived from a variational principle as a
n Euler-Lagrange system of equations for the Lagrangian
$$
l(u,v)  = \int_{\mathbb{R}} \left(uv + u_xv_x \right) \text{ d}x.
$$ Alternatively it can be formulated as a two-component system of Euler-Poincar\'e (EP)
equations in one dimension on $\mathbb{R}$ as follows, $$
{\partial_t}m= -\, {\rm ad}^*_{\delta h/\delta m} m =
-\,(vm)_{x}-mv_x \quad\hbox{with}\quad v := \frac{\delta h}{\delta
m} = K*n ,
$$
$$
{\partial_t}n= -\, {\rm ad}^*_{\delta h/\delta n} n =
-\,(un)_{x}-nu_x \quad\hbox{with}\quad u := \frac{\delta h}{\delta
n} = K*m ,
$$ with $K(x,y)=\frac12 e^{-|x-y|}$  being the Green function of the Helmholtz operator, and
$h$ being the Hamiltonian \begin{eqnarray*} h(n,m)  =
\int_{\mathbb{R}} n\,K*m  \text{ d}x = \!\!\int_{\mathbb{R}}
m\,K*n  \text{ d}x.
\end{eqnarray*}

This Hamiltonian system has {\bf two}-component singular momentum
map \cite{HoMa2004}
$$m(x,t)=\sum_{a=1}^{M}m_{a}(t)\,\delta(x-q_{a}(t)), \qquad
 n(x,t)=\sum_{b=1}^{N}n_{b}(t)\,\delta(x-r_{b}(t)).$$

The $M=N=1$ case is very simple for analysis \cite{CHIP}. If  the
initial conditions are $m_1(0)>0$ and $n_1(0)>0$ then one observes
the so-called {\it waltzing} motion.  It could be noted that for
half of the waltzing period (half cycle) the two types of peakons
exchange momentum amplitudes - see Fig. 1. The explicit solutions
as well as other examples with waltzing peakons and compactons are
given in \cite{CHIP}.

\begin{figure}[t]
\begin{center}
\includegraphics*[width=0.8\textwidth]{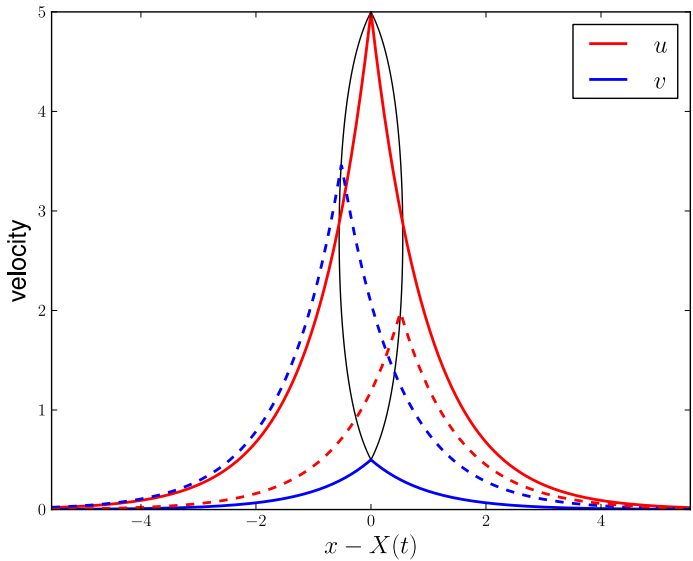}
\end{center}
\caption{\label{peakon_orbits} Plot showing velocity fields of a
peakon-peakon pair with $m_1(0)=10$, $n_1(0)=1$ (solid lines). The
dotted path indicates the subsequent path of the two peaks in the
frame travelling at the particles mean velocity. For these initial
conditions the total period for one orbit of the cycle is $T=3.6$.
Also shown is the form of the two peakons at subsequent times
$t=0.45+1.8n$, $n\in \mathbb{Z}$. }
\end{figure}

The aim of this study is to analyse the persistence of compact
support for solutions of the system \eqref{sys}. In particular, we
will examine whether the solution $m,n$, and in turn $u,v$, of
\eqref{CH1}-\eqref{CH2}, which initially have compact support,
will continue to do so as they evolve. Solutions of the system
which have compact support can be viewed as localized
disturbances, and whether a ``disturbance'' which is initially
localized propagates with a finite, or infinite speed, is a matter
of great interest. We will see that some solutions will remain
compactly supported at all future times of their existence, while
others solution display an infinite speed of propagation and
instantly lose their compact support. These results have analogues
in the CH case, which is simply the CH equation
\cite{Con1,HenJNMP,HenDCD}.

\section{Preliminaries}
We may re-express equation \eqref{sysa}  in terms of
$u$ and $v$ as follows
\begin{subequations}\label{sys}
\begin{align}
u_t-u_{xxt}+2v_xu-2v_xu_{xx}+vu_x-vu_{xxx}&=0, \label{CH1a} \\
v_t-v_{xxt}+2u_xv-2u_xv_{xx}+uv_x-uv_{xxx}&=0. \label{CH2a}
\end{align}
\end{subequations}

From this form of the equations one observes that there are no
terms with self-interaction (e.g. $u u_x$, $u_x u_{xx}$,
$uu_{xxx}$ etc.) which justifies the name 'cross-coupled'.

If $p(x)=\frac{1}{2}e^{-|x|},\ x\in\mathbb R$, then
$(1-\partial_x^2)^{-1}f=p\ast f$ for all $f\in L^2(\mathbb R)$ and
so $p \ast m=u$, $p \ast n=v$, where $\ast$ denotes convolution in
the spatial variable. Indeed,
 \begin{equation}
u(x)=\frac 12e^{-x}\int_{-\infty}^x e^ym(y)\,dy + \frac 12
e^x\int_x^{\infty}e^{-y}m(y)\,dy. \label{udecomp} \end{equation}

\begin{equation}
u_x(x)=-\frac 12e^{-x}\int_{-\infty}^x e^ym(y)\,dy + \frac 12
e^x\int_x^{\infty}e^{-y}m(y)\,dy. \label{uxdecomp} \end{equation}

In other words, if we denote by $I_1(x)$ and $I_2(x)$ the
integrals appearing in the first and the second term of
(\ref{udecomp}), we have

\begin{equation}
u=I_1+I_2, \qquad u_x=-I_1+I_2. \label{uANDuxdecomp}
\end{equation}

Applying the convolution operator to equation \eqref{sys} we can
re-express it in the form of a conservation law \begin{equation}
\label{CH1'} (u+v)_t+\partial_x\left(uv+ p \ast \left(2uv+u_xv_x
\right)\right)=0,\quad x\in \mathbb R,\ t\geq0,
\end{equation}  Thus $L=u+v$ is a density of the conserved momentum $\int(m+n)dx $.
The representation (\ref{CH1'}) agrees with the CH reduction when
$u=v$, cf. \cite{HenJNMP}.

The Hamiltonian $$H=\int(uv+u_xv_x)dx $$ (in terms of $u$ and $v$)
is of course another conserved quantity, the 'energy' of the
system, see more details in \cite{CHIP}.

%Of course, any solution of \eqref{sys} will also solve of
%\eqref{sys'}, but in general the converse will not hold: the
%solutions of \eqref{sys'} solve \eqref{sys} in a weak sense.

One can directly observe that (\ref{sys}) can be complexified in a
natural way if the variables $u$, $v$ are assumed complex, while
the independent variables $x$, $t$ are still real. Such a
complexified system is remarkable with the fact that it admits the
obvious reduction $u=\bar{v}$ which leads to a single scalar
complex equation:
\begin{equation}\label{complex}
u_t-u_{xxt}+2\bar{u}_xu-2\bar{u}_xu_{xx}+\bar{u}u_x-\bar{u}
u_{xxx}=0.
\end{equation}

This is a geodesic equation for a complex $H^1$ metric, given by
the Hamiltonian $H=\frac12\int(|u|^2+|u_x|^2)dx$.

Of course, if one reverts to real dependent variables according to
$u=r+is$ then (\ref{complex}) leads to the coupled system
\begin{subequations}\label{sys rs}
\begin{align}
r_t-r_{xxt}+2(rr_x+ss_x)-2(r_xr_{xx}+s_xs_{xx})-(rr_{xxx}+ss_{xxx})&=0, \label{CHr} \\
s_t-s_{xxt}+r_xs-rs_x
-2(r_xs_{xx}-s_xr_{xx})-(rs_{xxx}-sr_{xxx})&=0. \label{CHs}
\end{align}
\end{subequations}

Unless it is explicitly specified that the variables $(u,v)$ are
complex, we assume that they are real.

\section{Results}
In the following we let $T=T(u_0,v_0)>0$ denote the
maximal existence time of the solutions $u(x,t),v(x,t)$ to the
system \eqref{sys} with the given initial data $u_0(x)$ and
$v_0(x)$.

\subsection{Persistence of compact support for the momenta}
For the following, the flow prescribed by the system
\eqref{sysa}
is given by the two families of diffeomorphisms {$\left\{
\varphi(\cdot,t) \right\}_{t\in[0,T)}$}, {$\left\{ \xi(\cdot,t)
\right\}_{t\in[0,T)}$}
 as follows:
\begin{align}\label{diff}
\left\{\begin{array}{l}
 \varphi_t(x,t)=v(\varphi(x,t),t), \\
\varphi(x,0)=x,
\end{array}\right.
\qquad
\begin{array}{l}
 \xi_t(x,t)=u(\xi(x,t),t),\\ \xi(x,0)=x.
\end{array}
\end{align}
Solving \eqref{diff}, we get
\begin{equation}\label{incr}
\varphi_x(x,t)=e^{\int_0^t v_x(\varphi(x,s),s)ds},\ \xi_x(x,t)=e^{\int_0^t u_x(\xi(x,s),s)ds}>0,
\end{equation}
hence $\varphi(\cdot,t),\xi(\cdot,t)$ are increasing functions.
\begin{lemma}
\label{positivity} Assume that $u_0$ and $v_0$ are such that
$m_0=u_0-u_{0,xx}$ and $n_0=v_0-v_{0,xx}$ are nonnegative
(nonpositive) for $x\in \mathbb R$. Then $m(x,t)$ and $n(x,t)$
remain nonnegative (nonpositive) for all $t\in[0,T)$.
\end{lemma}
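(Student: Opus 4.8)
The plan is to track $m$ and $n$ along the characteristic curves generated by the flows $\varphi$ and $\xi$ already introduced in \eqref{diff}, and to exploit the fact that along these curves each momentum satisfies a \emph{linear} ordinary differential equation whose integrating factor never vanishes. Since the argument is identical for the nonnegative and nonpositive cases, and symmetric in $(m,\varphi,v)$ and $(n,\xi,u)$, I will concentrate on showing that $m(x,t)\ge 0$ for all $t$ whenever $m_0\ge 0$.

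First I would compute the evolution of $m$ along the flow $\varphi$. Differentiating $t\mapsto m(\varphi(x,t),t)$ and using $\varphi_t=v(\varphi,t)$ from \eqref{diff} gives, by the chain rule,
\begin{equation*}
\frac{d}{dt}\,m(\varphi(x,t),t)=\big(m_t+vm_x\big)(\varphi(x,t),t).
\end{equation*}
Equation \eqref{CH1} supplies $m_t+vm_x=-2v_x m$, so this reduces to the linear ODE
\begin{equation*}
\frac{d}{dt}\,m(\varphi(x,t),t)=-2\,v_x(\varphi(x,t),t)\,m(\varphi(x,t),t),
\end{equation*}
in which $x$ plays the role of a fixed parameter.

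Next I would integrate this ODE explicitly. Its solution is
\begin{equation*}
m(\varphi(x,t),t)=m_0(x)\,\exp\!\Big({-2\int_0^t v_x(\varphi(x,s),s)\,ds}\Big),
\end{equation*}
and comparing the exponential factor with \eqref{incr} I would rewrite it as $m(\varphi(x,t),t)=m_0(x)\,\varphi_x(x,t)^{-2}$. Because $\varphi_x>0$ by \eqref{incr}, the integrating factor is strictly positive, so $m(\varphi(x,t),t)$ has exactly the same sign as $m_0(x)$ for every $t\in[0,T)$. Finally, since $\varphi(\cdot,t)$ is an increasing diffeomorphism of $\mathbb{R}$, every point $y$ is of the form $y=\varphi(x,t)$ for a unique $x$, so $m(y,t)\ge 0$ for all $y\in\mathbb{R}$ and $t\in[0,T)$. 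The identical computation along $\xi$, using \eqref{CH2} and $\xi_t=u(\xi,t)$, yields $n(\xi(x,t),t)=n_0(x)\,\xi_x(x,t)^{-2}$, which settles the claim for $n$.

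I do not anticipate a genuine obstacle here: the cross-coupling (that $m$ is transported by the velocity $v$ and $n$ by the velocity $u$) is harmless for this argument, since positivity relies only on each flow being a bona fide increasing diffeomorphism, which \eqref{incr} already guarantees. The one point requiring care is regularity: I would note that for $t\in[0,T)$ the solution is smooth enough that $v_x$ (respectively $u_x$) is continuous and bounded along each characteristic, so that the integral $\int_0^t v_x(\varphi(x,s),s)\,ds$ converges and the manipulation leading to the integrating factor is justified.
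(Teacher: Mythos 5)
Your proposal is correct and takes essentially the same approach as the paper: your integrated identity $m(\varphi(x,t),t)=m_0(x)\,\varphi_x(x,t)^{-2}$ is exactly the paper's conservation relation \eqref{int}, which the paper obtains by differentiating the product $m(\varphi(x,t),t)\varphi_x^2(x,t)$ directly rather than by solving the linear ODE via an integrating factor. The sign-preservation conclusion then follows in both arguments from the positivity of $\varphi_x$ and $\xi_x$ guaranteed by \eqref{incr}.
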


\begin{proof}
 It follows from   \eqref{sysa} that
\begin{eqnarray*}
\frac{d\phantom{t}}{dt}m(\varphi(x,t),t)\varphi_x^2(x,t)=m_t\varphi_x^2+m_x\varphi_t\varphi_x^2+2m\varphi_x\varphi_{xt}
\\ =(m_t+2v_xm+vm_x)\varphi_x^2=0,
\end{eqnarray*}
and
\begin{eqnarray*}
\frac{d\phantom{t}}{dt}n(\xi(x,t),t)\xi_x^2(x,t)=n_t\xi_x^2+n_x\xi_t\xi_x^2+2m\xi_x\xi_{xt}
\\ =(n_t+2u_xn+un_x)\xi_x^2=0.
\end{eqnarray*}
Therefore
\begin{equation}\label{int}
m(\varphi(x,t),t)\varphi_x^2(x,t)=m_0(x), \quad
n(\xi(x,t),t)\xi_x^2(x,t)=n_0(x).
\end{equation}
Now, since  $m_0(x),n_0(x)$ are nonnegative (nonpositive) then
$m(x,t)$ and $n(x,t)$ remain nonnegative (nonpositive) for all
$t\in[0,T)$. \qed
\end{proof}

\begin{lemma}
\label{mcompact} Assume that $u_0$ is such that $m_0=u_0-u_{0,xx}$
has compact support, contained in the interval
$[\alpha_{m_0},\beta_{m_0}]$ say, then for any $t\in[0,T)$, the
function $x \mapsto m(x,t) $ has compact support contained in the
interval $[\varphi(\alpha_{m_0},t),\varphi(\beta_{m_0},t)]$ for
all $t\in[0,T)$. Similarly, if $n_0=v_0-v_{0,xx}$ has compact
support,  then the function $x \mapsto n(x,t) $ is compactly
supported for all $t\in[0,T)$.
\end{lemma}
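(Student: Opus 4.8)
The plan is to leverage the representation formula already established in Lemma~\ref{positivity}, namely equation~\eqref{int}, which gives us the transport identity $m(\varphi(x,t),t)\varphi_x^2(x,t)=m_0(x)$. The key observation is that the momentum $m$ is \emph{transported along the characteristics} $\varphi(\cdot,t)$: the value of $m$ at the point $\varphi(x,t)$ is, up to the strictly positive Jacobian factor $\varphi_x^2(x,t)$, exactly the initial value $m_0(x)$. Since the factor $\varphi_x^2(x,t)>0$ never vanishes (this is guaranteed by the explicit exponential formula~\eqref{incr}), the support of $m(\cdot,t)$ is precisely the image under the diffeomorphism $\varphi(\cdot,t)$ of the support of $m_0$. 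The same reasoning applies verbatim to $n$ using $\xi$ in place of $\varphi$.

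First I would fix $t\in[0,T)$ and argue the two directions of the support inclusion. Rewriting~\eqref{int} as $m(\varphi(x,t),t)=m_0(x)/\varphi_x^2(x,t)$, I observe that for any point $y$ in the range of $\varphi(\cdot,t)$ we may write $y=\varphi(x,t)$ for a unique $x$ (since $\varphi(\cdot,t)$ is an increasing, hence injective, function by~\eqref{incr}), and then $m(y,t)=0$ if and only if $m_0(x)=0$. Thus the zero set of $m(\cdot,t)$ is the image of the zero set of $m_0$, and equivalently the support of $m(\cdot,t)$ is $\varphi(\mathrm{supp}\,m_0,t)$. Because $\varphi(\cdot,t)$ is continuous and increasing, it maps the interval $[\alpha_{m_0},\beta_{m_0}]$ onto the interval $[\varphi(\alpha_{m_0},t),\varphi(\beta_{m_0},t)]$, which is compact as the continuous image of a compact set. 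This yields exactly the claimed containment.

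The one point requiring a little care is that $\varphi(\cdot,t)$ is a diffeomorphism of the whole real line for each fixed $t\in[0,T)$, so that the image of a bounded interval is again a bounded interval; this is where the strict positivity and finiteness of $\varphi_x$ in~\eqref{incr} is essential, as it guarantees $\varphi(\cdot,t)$ is a genuine bijection of $\mathbb{R}$ without blow-up on $[0,T)$. I would remark that the momentum genuinely cannot escape the transported interval, since outside $\varphi([\alpha_{m_0},\beta_{m_0}],t)$ every point is the image of some $x\notin[\alpha_{m_0},\beta_{m_0}]$, where $m_0(x)=0$. The statement for $n$ follows identically with $\xi$ replacing $\varphi$ and $n_0$ replacing $m_0$. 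I do not anticipate a genuine obstacle here: the heavy lifting was done in Lemma~\ref{positivity}, and this result is essentially a corollary of the conservation-law form~\eqref{int} combined with the monotonicity of the flow maps.
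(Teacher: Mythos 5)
Your proof is correct and follows essentially the same route as the paper, which likewise deduces the result directly from the transport identity~\eqref{int} together with the monotonicity of the flow map $\varphi(\cdot,t)$. You simply make explicit the details the paper leaves implicit (injectivity and surjectivity of $\varphi(\cdot,t)$ as a diffeomorphism of $\mathbb{R}$, strict positivity of $\varphi_x^2$), which is a sound elaboration rather than a different argument.
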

\begin{proof}
From (\ref{int}) and from the assumption that  $m_0(x)$ is
supported in the compact interval  $[\alpha_{m_0},\beta_{m_0}]$,
it follows directly
that $m(\cdot,t)$ are compactly supported, with support
contained in the interval
$[\varphi(\alpha_{m_0},t),\varphi(\beta_{m_0},t)]$, for all
$t\in[0,T)$. Similar reasoning applies to $n_0$. \qed
\end{proof}
Relation (\ref{int}) represents the conservation of momentum in
the physical variables cf. discussion in \cite{CHIP}.

\subsection{On the evolution of $(u,v)$}
In this subsection we are going to examine the general behaviour of
the solution $(u,v)$ of \eqref{sys} which is initially compactly supported. The
following Theorem provides us with some information about the
asymptotic behavior of the solution as it evolves over time - in
general, the solution has an exponential decay as $|x|\rightarrow
\infty$ for all future times $t\in[0,T)$.
\begin{theorem} \label{asymptotics}
 Let $(u,v)$ be a nontrivial solution of \eqref{sys},
with maximal time of existence $T>0$, and which is initially
compactly supported on an interval $\mathcal
I_0=[\alpha_{u_0},\beta_{u_0}]\times[\alpha_{v_0},\beta_{v_0}]$.
Then we have
\begin{align}
u(x,t)=
\left\{
\begin{array}{ll}
\frac 12 E^u_+(t)e^{-x} & \mbox{ for } x> \xi(\beta_{u_0},t), \\
\frac 12 E^u_-(t)e^{x} & \mbox{ for } x<\xi(\alpha_{u_0},t),
\end{array}
\right.,
\\
v(x,t)=
\left\{
\begin{array}{ll}
\frac 12 E^v_+(t)e^{-x} & \mbox{ for } x> \varphi(\beta_{v_0},t), \\
\frac 12 E^v_-(t)e^{x} & \mbox{ for } x<\varphi(\alpha{v_0},t),
\end{array}
\right.
\end{align}
where $\alpha,\beta$ are defined in \eqref{alphabet} below,
and  $E^u_-,E^u_+,E^v_-,E^v_+$ are continuous functions, with
$E^u_+(0)=E^v_+(0)=E^u_-(0)=E^v_-(0)=0$.

\end{theorem}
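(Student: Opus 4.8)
The plan is to use the integral representations for $u$ and $v$ in terms of their momenta $m$ and $n$, combined with the compact-support result for the momenta established in Lemma~\ref{mcompact}. Recall from \eqref{udecomp} that $u(x,t)=\frac12 e^{-x}I_1(x,t)+\frac12 e^{x}I_2(x,t)$ where $I_1(x,t)=\int_{-\infty}^x e^y m(y,t)\,dy$ and $I_2(x,t)=\int_x^\infty e^{-y}m(y,t)\,dy$, with an analogous decomposition for $v$ in terms of $n$. The key structural observation is that since $m(\cdot,t)$ is supported in $[\xi(\alpha_{u_0},t),\xi(\beta_{u_0},t)]$ — this is exactly the conclusion of Lemma~\ref{mcompact}, applied with the flow $\xi$ that transports $m$ — the two integrals $I_1,I_2$ become \emph{constant} in $x$ once $x$ exits the support interval.

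The main steps are as follows. First I would define the quantities that will play the role of the time-dependent amplitudes by setting, for the $u$-component,
\begin{align}\label{alphabet}
E^u_+(t)=\int_{\xi(\alpha_{u_0},t)}^{\xi(\beta_{u_0},t)} e^{y}m(y,t)\,dy,
\qquad
E^u_-(t)=\int_{\xi(\alpha_{u_0},t)}^{\xi(\beta_{u_0},t)} e^{-y}m(y,t)\,dy,
\end{align}
and similarly $E^v_\pm(t)$ in terms of $n$ over the interval $[\varphi(\alpha_{v_0},t),\varphi(\beta_{v_0},t)]$. Second, for $x>\xi(\beta_{u_0},t)$ the support of $m(\cdot,t)$ lies entirely to the left of $x$, so $I_2(x,t)=0$ while $I_1(x,t)=E^u_+(t)$; substituting into \eqref{udecomp} gives $u(x,t)=\frac12 E^u_+(t)e^{-x}$. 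Symmetrically, for $x<\xi(\alpha_{u_0},t)$ the support lies entirely to the right, so $I_1(x,t)=0$ and $I_2(x,t)=E^u_-(t)$, yielding $u(x,t)=\frac12 E^u_-(t)e^{x}$. The same argument applied to $v$ through the representation $v=p\ast n$ and the flow $\varphi$ produces the stated formulae for $v$.

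Third, to verify the boundary conditions $E^u_+(0)=E^u_-(0)=E^v_+(0)=E^v_-(0)=0$, I would invoke the nontriviality and the conservation identity \eqref{int}. At $t=0$ the amplitudes reduce to $E^u_+(0)=\int_{\alpha_{u_0}}^{\beta_{u_0}}e^{y}m_0(y)\,dy$ and similarly for the others; these vanish precisely when the solution is genuinely supported on $\mathcal I_0$ with no net exponential tail — the hypothesis that the initial data $(u_0,v_0)$ is compactly supported on $\mathcal I_0$ forces $m_0$ to integrate against $e^{\pm y}$ to zero over any interval strictly containing its support, since otherwise $u_0$ would carry a nonzero exponential tail and fail to be compactly supported. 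Finally, continuity of $E^u_\pm,E^v_\pm$ in $t$ follows from the continuous dependence of the flows $\xi,\varphi$ and of the momenta $m,n$ on time over $[0,T)$.

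The step I expect to be the main obstacle is the vanishing of the amplitudes at $t=0$, since it is the one place where the \emph{compact support of $u_0,v_0$ themselves} (not merely of $m_0,n_0$) is used, and it requires the elementary but essential observation that a compactly supported $u_0$ satisfying $u_0=p\ast m_0$ must have $\int e^{y}m_0(y)\,dy=\int e^{-y}m_0(y)\,dy=0$; everything else is a direct consequence of the integral representation \eqref{udecomp} together with Lemma~\ref{mcompact}.
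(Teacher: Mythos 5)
Your proposal is correct and follows essentially the same route as the paper: the convolution representation \eqref{udecomp} together with Lemma~\ref{mcompact} to make $I_1,I_2$ constant outside the transported support of the momenta, with the amplitudes $E^u_\pm,E^v_\pm$ defined exactly as in \eqref{EuDef}. The only cosmetic difference is the justification of $E^u_\pm(0)=E^v_\pm(0)=0$, which you read off from the exponential-tail representation of the compactly supported $u_0$ (in effect the content of Lemma~\ref{equivalence}), whereas the paper verifies the same vanishing via integration by parts with vanishing boundary terms --- the two observations are equivalent.
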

\begin{proof}
Firstly, if $(u_0,v_0)$ is initially supported on the compact interval $\mathcal I_0=[\alpha_{u_0},\beta_{u_0}]\times[\alpha_{v_0},\beta_{v_0}]$ then so too is $m_0$, and from the proof Lemma~\ref{mcompact} it follows that $\left(m(\cdot,t),n(\cdot,t)\right)$ is compactly supported, with its support contained in the interval $\mathcal I_t=[\xi(\alpha,t),\xi(\beta,t)]\times[\varphi(\alpha,t),\varphi(\beta,t)]$ for fixed $t\in[0,T)$.  Here
\begin{equation}\label{alphabet}
\alpha=\min\{\alpha_{u_0},\alpha_{v_0}\},\ \beta=\max\{\beta_{u_0},\beta_{v_0}\}.
\end{equation}
We use the  relation $u=p\ast m$ to write
\[
u(x)=\frac 12 e^{-x}\int_{-\infty}^x e^ym(y)\,dy + \frac 12 e^x\int_x^{\infty}e^{-y}m(y)\,dy,
\]
and then we define our functions
\begin{equation}\label{EuDef}
E^u_+(t)=\int_{\xi(\alpha,t)}^{\xi(\beta,t)}e^ym(y,t) \,dy \quad
\mbox{ and }
E^u_-(t)=\int_{\xi(\alpha,t)}^{\xi(\beta,t)}e^{-y}m(y,t)dy.
\end{equation}
We have
\begin{equation}\label{uEpm}\begin{array}{ll}
u(x,t)= \frac{1}{2}e^{-x}E^u_+(t), \quad & x> \xi(\beta,t), \\
u(x,t)= \frac{1}{2}e^{x}E^u_-(t), \quad & x<\xi(\alpha,t),
\end{array}
\end{equation}
and therefore from differentiating \eqref{uEpm} directly we get
\begin{equation*}\begin{array}{ll}
\frac{1}{2}e^{-x}E^u_+(t)=u(x,t)=-u_x(x,t)=u_{xx}(x,t), \quad & x> \xi(\beta,t), \\
\frac{1}{2}e^{x}E^u_-(t)=u(x,t)=u_x(x,t)=u_{xx}(x,t),\quad & x<\xi(\alpha,t).
\end{array}
\end{equation*}
Since $u(\cdot,0)$ is supported in the interval $[\alpha,\beta]$,
we have  $E^u_+(0)=E^u_-(0)=0$, as we can see by taking
integration by parts where the boundary terms vanish. \qed
\end{proof}

\begin{corollary}
If in addition $m_0(x)$ and $n_0(x)$ are everywhere nonnegative
(nonpositive), then the solution $(u,v)$ (if nontrivial) loses its
compactness immediately.
\end{corollary}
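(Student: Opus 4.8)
The plan is to read the loss of compact support directly off the tail representation furnished by Theorem~\ref{asymptotics}, using the sign hypothesis to guarantee that the tail coefficients cannot vanish. Recall that for $x>\xi(\beta,t)$ we have $u(x,t)=\tfrac12 e^{-x}E^u_+(t)$ with $E^u_+(t)=\int_{\xi(\alpha,t)}^{\xi(\beta,t)}e^{y}m(y,t)\,dy$, and this representation rests only on the compact support of the momentum $m(\cdot,t)$, which persists by Lemma~\ref{mcompact} throughout $[0,T)$. Thus $u(\cdot,t)$ fails to be compactly supported precisely when $E^u_+(t)\neq0$ (equivalently $E^u_-(t)\neq0$ on the left), and the entire task reduces to showing that the nonnegativity of $m_0,n_0$ forces these coefficients to be nonzero for every positive time.

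First I would invoke Lemma~\ref{positivity}: since $m_0\ge0$ and $n_0\ge0$, the momenta satisfy $m(\cdot,t)\ge0$ and $n(\cdot,t)\ge0$ for all $t\in[0,T)$. Next I would record that nontriviality is propagated by the transport identity \eqref{int}: from $m(\varphi(x,t),t)\varphi_x^2(x,t)=m_0(x)$ together with $\varphi_x>0$, it follows that $m_0\not\equiv0$ implies $m(\cdot,t)\not\equiv0$ for each fixed $t$, and symmetrically for $n$ via $\xi$. Since $(u,v)$ is nontrivial, at least one of $m_0,n_0$ is not identically zero; say $m_0\not\equiv0$. Then for $t\in(0,T)$ the integrand $e^{y}m(y,t)$ is nonnegative, not identically zero on the support $[\xi(\alpha,t),\xi(\beta,t)]$, and weighted by the strictly positive factor $e^{y}$, so $E^u_+(t)>0$. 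Consequently $u(x,t)=\tfrac12 e^{-x}E^u_+(t)>0$ for every $x>\xi(\beta,t)$, so $u(\cdot,t)$ cannot be compactly supported; the same argument with $e^{-y}$ gives $E^u_-(t)>0$ and the left-hand tail, and the identical reasoning applied to $n_0$ (if $n_0\not\equiv0$) handles $v$. In the nonpositive case the same computation yields $E^u_\pm(t)<0$, again nonzero. Hence $(u,v)$ has genuine exponential tails for every $t>0$, i.e.\ compactness is lost immediately.

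The conceptual crux — and what makes this corollary easier than the general situation of the Theorem rather than harder — lies in the word ``immediately''. In the general, sign-changing case the tail coefficients begin at $E^u_\pm(0)=0$, and establishing that they depart from zero for $t>0$ is the delicate analytic point, since one must preclude persistent cancellation among positive and negative contributions to the integrals. The one-sided sign assumption dissolves this difficulty entirely: an integral of a nontrivial, sign-definite density against the strictly positive kernel $e^{\pm y}$ simply cannot vanish, so no cancellation is available and the tails are switched on for all positive time. The only points requiring care are bookkeeping ones: tracking which of the two components is nontrivial (settled by \eqref{int} and $\varphi_x,\xi_x>0$), and noting that the representation formulas from Theorem~\ref{asymptotics} depend only on compact support of the momenta, hence remain valid on all of $[0,T)$ by Lemma~\ref{mcompact}.
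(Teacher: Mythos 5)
Your proposal is correct and follows essentially the same route as the paper: invoke Lemma~\ref{positivity} for sign-definiteness of $m,n$, then observe that the tail coefficients $E^u_\pm,E^v_\pm$ from Theorem~\ref{asymptotics} are integrals of a sign-definite, nontrivial density against $e^{\pm y}$ and hence nonzero for $t>0$, forcing genuine exponential tails. Your explicit use of the transport identity \eqref{int} with $\varphi_x,\xi_x>0$ to propagate nontriviality of $m_0,n_0$ forward in time is a detail the paper leaves implicit, but it does not change the argument.
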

\begin{proof} Indeed, in order for an nontrivial solution to stay
compact one needs $E^u_{\pm}(t)\equiv 0$, $E^v_{\pm}(t)\equiv 0$
for all $t\in [0,T]$. However from Lemma \ref{positivity} it
follows that $m(x,t)$ and $n(x,t)$ remain everywhere nonnegative
(nonpositive) and thus the quantities $E^u_{\pm}(t)$,
$E^v_{\pm}(t)$ defined e.g. in  (\ref{EuDef}) are positive
(negative) for all $t\in (0,T]$ in the case of a nontrivial
solution. \qed
\end{proof}

From (\ref{CH1'}) we know that $L=u+v $ is a density of a
conserved quantity and as such it deserves a special attention.
From Theorem \ref{asymptotics} one can find the asymptotics of $L$
as $x\to \pm \infty$ as $$L \to \frac{1}{2}E_{\pm}(t)e^{-|x|}  $$
where $E_{\pm}\equiv E^u_{\pm}+E^v_{\pm}$. Since the nature of the
solution that we expect is several coupled 'waltzing' waves, i.e.
the maximum elevations of $u(x,t)$ and $v(x,t)$ increase and
decrease with time in the waltzing process. In other words the
functions $E^u_{\pm}(t)$ and $E^v_{\pm}(t)$ are in general
non-monotonic functions of $t$. However in some cases a monotonic
property holds for the conserved density $L$:

\begin{theorem}\label{Epm}
If $(u,v)$ is an initially compactly supported solution and in
addition $m_0(x)$ and $n_0(x)$ are everywhere nonnegative
(nonpositive), then the quantity $E_{+}(t)$ is a monotonically
increasing function and $E_{-}(t)$ is a monotonically decreasing
function.
\end{theorem}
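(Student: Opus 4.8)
The plan is to differentiate the quantities $E^u_{\pm}(t)$ and $E^v_{\pm}(t)$ directly in time and to show that the resulting expressions are sign-definite under the hypothesis $m_0,n_0\geq 0$ (or $\leq 0$). Since $m(\cdot,t)$ and $n(\cdot,t)$ are compactly supported with support strictly inside $[\xi(\alpha,t),\xi(\beta,t)]$ and $[\varphi(\alpha,t),\varphi(\beta,t)]$ respectively (Lemma~\ref{mcompact} together with the choice \eqref{alphabet}), I would first extend the integration limits in \eqref{EuDef} to all of $\mathbb R$, writing $E^u_{\pm}(t)=\int_{\mathbb R}e^{\pm y}m(y,t)\,dy$ and analogously for $E^v_{\pm}$. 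Because $m$ vanishes near the (moving) endpoints, no boundary contributions arise from the limits when we differentiate, and we obtain $\frac{d}{dt}E^u_+(t)=\int_{\mathbb R}e^{y}m_t(y,t)\,dy$, into which I substitute the evolution law $m_t=-2v_xm-vm_x$ from \eqref{CH1}.

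Next I would integrate by parts the term containing $m_x$, the boundary terms vanishing by compact support, so as to collect everything into a single integral against $m$. A short computation then reduces $\frac{d}{dt}E^u_+(t)$ to $\int_{\mathbb R}e^{y}m\,(v-v_x)\,dy$ and $\frac{d}{dt}E^u_-(t)$ to $-\int_{\mathbb R}e^{-y}m\,(v+v_x)\,dy$. The key step is to invoke the explicit decomposition \eqref{udecomp}--\eqref{uxdecomp}, applied to $v=p\ast n$, which furnishes the one-sided representations
\[
v-v_x=e^{-x}\int_{-\infty}^{x}e^{z}n(z)\,dz,\qquad v+v_x=e^{x}\int_{x}^{\infty}e^{-z}n(z)\,dz.
\]
Substituting these yields the structured formulas
\[
\frac{d}{dt}E^u_+(t)=\int_{\mathbb R}m(x,t)\Big(\int_{-\infty}^{x}e^{z}n(z,t)\,dz\Big)\,dx,\qquad \frac{d}{dt}E^u_-(t)=-\int_{\mathbb R}m(x,t)\Big(\int_{x}^{\infty}e^{-z}n(z,t)\,dz\Big)\,dx,
\]
and, by the symmetry $u\leftrightarrow v$, $m\leftrightarrow n$, $\xi\leftrightarrow\varphi$ of the system \eqref{sysa}, the corresponding expressions for $\frac{d}{dt}E^v_{\pm}(t)$ with the roles of $m$ and $n$ interchanged.

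To finish, I would note that Lemma~\ref{positivity} guarantees that $m(\cdot,t)$ and $n(\cdot,t)$ retain the sign of $m_0,n_0$ for every $t\in[0,T)$. In the nonnegative case both $m$ and the inner one-sided integrals are nonnegative, whence $\frac{d}{dt}E^u_+,\frac{d}{dt}E^v_+\geq 0$ and $\frac{d}{dt}E^u_-,\frac{d}{dt}E^v_-\leq 0$; summing gives $\frac{d}{dt}E_+(t)\geq 0$ and $\frac{d}{dt}E_-(t)\leq 0$, which is the assertion. In the nonpositive case the two sign reversals, one in $m$ and one in the inner integral (which carries $n$), cancel, so each product stays nonnegative and exactly the same monotonicity follows. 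The step I expect to demand the most care is the sign bookkeeping after the integration by parts, namely correctly matching the combinations $v\mp v_x$ with the two one-sided integrals, since a slip there would destroy the sign-definiteness that drives the whole argument; by contrast, extending the limits, the vanishing of the boundary terms, and differentiation under the integral are all routine consequences of the compact support established earlier.
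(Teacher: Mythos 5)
Your proposal is correct and follows essentially the same strategy as the paper: differentiate $E^u_{\pm},E^v_{\pm}$, substitute the evolution laws \eqref{CH1}--\eqref{CH2}, integrate by parts with boundary terms killed by compact support, and conclude sign-definiteness from Lemma~\ref{positivity} together with the one-sided structure of the Helmholtz kernel \eqref{udecomp}--\eqref{uxdecomp}; your only (harmless) variation is to stop at the manifestly sign-definite form $\int m(x)\bigl(\int_{-\infty}^{x}e^{z}n\,dz\bigr)dx$, whereas the paper first reduces to $\int e^{\pm y}\left(2uv+u_xv_x\right)dy$ and then expands in the quantities $I_{1,2}^{u,v}$ to obtain $3I_1^uI_1^v+I_2^uI_1^v+I_1^uI_2^v+3I_2^uI_2^v$. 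Your sign bookkeeping, including the cancellation of the two sign reversals in the nonpositive case, checks out and matches the paper's conclusion.
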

\begin{proof} Indeed, from Lemma \ref{positivity} it
follows that $m(x,t)$ and $n(x,t)$ remain everywhere nonnegative
(nonpositive) and from the explicit form of the inverse Helmholtz
operator $u(x,t)$ and $v(x,t)$  remain everywhere nonnegative
(nonpositive). Since $m(\cdot,t)$ is supported in the interval
$[\xi(\alpha,t),\xi(\beta,t)]$, for each fixed $t$, the derivative
is given by
\begin{equation*}
\frac{\mathrm dE^u_+(t)}{\mathrm dt} =
\int_{\xi(\alpha,t)}^{\xi(\beta,t)} e^y m_t(y,t)\mathrm dy=
\int_{-\infty}^\infty e^y m_t(y,t)\mathrm dy.
\end{equation*}
Similarly, if we define
\[
E^v_+(t)=\int_{\varphi(\alpha,t)}^{\varphi(\beta,t)}e^ym(y,t) \,
dy \quad \mbox{ and }
E^v_-(t)=\int_{\varphi(\alpha,t)}^{\varphi(\beta,t)}e^{-y}m(y,t)dy,
\] then  $E^v_+(0)=E^v_-(0)=0$, and
\begin{equation*}
\frac{\mathrm dE^v_+(t)}{\mathrm dt} = \int_{-\infty}^\infty e^y
n_t(y,t)\mathrm dy.
\end{equation*}
From \ref{CH2} and integration by parts we have
\begin{subequations}
\begin{align}
& \frac{\mathrm dE_+(t)}{\mathrm dt}
=\int_{-\infty}^{\infty} e^y(m_t(y,t)+n_t(y,t))\,dx  \nonumber \\
\nonumber
&=-\int_\mathbb R e^x \left(2v_x(u-u_{xx})+v(u-u_{xx})_x+2u_x(v-v_{xx})+u(v-v_{xx})_x\right)\,dx \nonumber\\
&= \int_{-\infty}^\infty e^y \left(2uv+u_{x}v_x\right)\,dy,\ \quad
t\in[0,T), \nonumber \\
\end{align} \label{IbyParts} \end{subequations}
where all boundary terms after integration by parts vanish, since
the functions {$m(\cdot,t)$}, {$n(\cdot,t)$} have compact support
and {$u(\cdot,t)$}, {$v(\cdot,t)$} decay exponentially at $\pm
\infty$, for all $t\in[0,T)$. Using (\ref{uANDuxdecomp}) for
$u=I_1^u+I_2^u$, $u_x=-I_1^u+I_2^u$, $v=I_1^v+I_2^v$,
$v_x=-I_1^v+I_2^v$, and noticing that all integrals
$I_{1,2}^{u,v}$ are all nonnegative (nonpositive), we have that
$$2uv+u_{x}v_x=3I_1^uI_1^v+I_2^uI_1^v+I_1^uI_2^v+3I_2^uI_2^v$$
and  thus \begin{equation} \frac{\mathrm
dE_+(t)}{\mathrm dt}>0.\label{e1} \end{equation} Similarly, we
have
\begin{align}\nonumber \frac{\mathrm dE_-(t)}{\mathrm dt} =
\int_{-\infty}^{\infty} e^{-y}(m_t(y,t)+n_t(y,t))\,dx \\
\label{e2} = -\int_{-\infty}^\infty e^{-y}
\left(2uv+u_{x}v_x\right)\,dy<0,\ \ t\in[0,T) \end{align} for
analogous reasons as before. \qed
\end{proof}

\subsection{Evolution in the case $u=\bar{ v}$ when initially compactly supported}

Some analytical results can be established in the case $u=\bar{
v}$, for example one can prove immediately the analogue of Theorem
\ref{Epm}:

\begin{theorem}
If $u=\bar{ v}$ is initially compactly supported, then $E_-=(E^u_-
+ E^v_-)(t)$ is a decreasing function, with $E_-(0)=0$, and
$E_+(t)$ is increasing, with $E_+(0)=0$.
\end{theorem}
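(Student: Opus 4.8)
The plan is to follow the scheme of the proof of Theorem~\ref{Epm} almost verbatim, the only genuinely new input being an algebraic simplification of the integrand that occurs under the reduction $u=\bar v$. The first observation is that, since differentiation in the real variable $x$ commutes with complex conjugation, $u=\bar v$ forces $n=v-v_{xx}=\overline{u-u_{xx}}=\bar m$. Consequently $E^v_\pm(t)=\int e^{\mp y}n(y,t)\,dy=\overline{\int e^{\mp y}m(y,t)\,dy}=\overline{E^u_\pm(t)}$, so that $E_\pm=E^u_\pm+E^v_\pm=2\,\mathrm{Re}\,E^u_\pm$ is \emph{real}-valued; this is precisely what makes the assertions ``increasing'' and ``decreasing'' meaningful in the complex setting, and it should be recorded first.

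Next I would settle the initial values. At $t=0$ the datum $m_0=u_0-u_{0,xx}$ is compactly supported, so integrating by parts in $E^u_\pm(0)=\int e^{\mp y}(u_0-u_{0,xx})\,dy$ and discarding the vanishing boundary terms gives $\int e^{\mp y}u_{0,xx}\,dy=\int e^{\mp y}u_0\,dy$, whence $E^u_\pm(0)=0$ and therefore $E_\pm(0)=0$, exactly as in Theorem~\ref{asymptotics}. This part is clean because it uses only the initial compact support.

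The heart of the argument is the differentiation step. Granting that the pair $(m,n)$ stays compactly supported — equivalently, that the boundary terms below still vanish — the computation in the proof of Theorem~\ref{Epm} is purely algebraic in the momentum equations \eqref{sysa}, which continue to hold for the complexified unknowns, and yields unchanged
\[
\frac{\mathrm dE_+}{\mathrm dt}=\int_{-\infty}^\infty e^y\bigl(2uv+u_xv_x\bigr)\,dy,\qquad \frac{\mathrm dE_-}{\mathrm dt}=-\int_{-\infty}^\infty e^{-y}\bigl(2uv+u_xv_x\bigr)\,dy.
\]
Substituting the reduction $v=\bar u$, $v_x=\bar u_x$ collapses the integrand to
\[
2uv+u_xv_x=2u\bar u+u_x\bar u_x=2|u|^2+|u_x|^2\ge 0,
\]
which is strictly positive wherever $u\not\equiv 0$. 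Hence $\mathrm dE_+/\mathrm dt>0$ and $\mathrm dE_-/\mathrm dt<0$ for a nontrivial solution, so $E_+$ increases and $E_-$ decreases. In contrast to Theorem~\ref{Epm}, no sign hypothesis on $m_0,n_0$ and no appeal to the decomposition $u=I_1+I_2$ is required: the Hermitian combination $2|u|^2+|u_x|^2$ is automatically of one sign.

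The main obstacle is the clause I flagged above — justifying that the compact-support/decay framework of Lemmas~\ref{positivity} and \ref{mcompact} transfers to the reduction $u=\bar v$. Those lemmas were obtained through the \emph{real} flows $\varphi,\xi$ of \eqref{diff}, whereas here the transport velocities $u,v=\bar u$ are complex; written for the real and imaginary parts $u=r+is$, the momentum equation has principal coefficient matrix $\left(\begin{smallmatrix} r & s\\ -s & r\end{smallmatrix}\right)$, whose eigenvalues $r\pm is$ are complex, so finite propagation speed — and hence literal persistence of compact support — is not guaranteed by the earlier arguments. To make the proof rigorous one would either verify directly that $m(\cdot,t)$ retains decay strong enough to keep $E^u_\pm(t)$ finite and the boundary terms vanishing, or reinterpret $E_\pm(t)$ through the exponential-decay asymptotics of Theorem~\ref{asymptotics} and check that these survive the reduction. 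Once finiteness of $E_\pm(t)$ and the vanishing of the boundary terms are secured, the monotonicity is immediate from the positivity of $2|u|^2+|u_x|^2$.
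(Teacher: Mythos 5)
Your proposal is correct and takes essentially the same route as the paper, whose entire proof consists of the remark that the argument of Theorem~\ref{Epm} goes through verbatim because under the reduction $u=\bar v$ the integrand becomes $2uv+u_xv_x=2|u|^2+|u_x|^2\ge 0$, making the sign hypotheses on $m_0,n_0$ and the $I_1,I_2$ decomposition unnecessary. The subtlety you flag at the end --- that the characteristic flows $\varphi,\xi$ of \eqref{diff} become complex under this reduction, so the persistence of compact support for $(m,n)$ from Lemmas~\ref{positivity} and~\ref{mcompact} is not literally justified --- is a genuine point that the paper's proof passes over in silence, so your write-up is, if anything, more careful than the published one.
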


\begin{proof} Follows the lines of the proof of Theorem \ref{Epm}.
In this case $2uv+u_xv_x=2|u|^2+|u_x|^2 \ge 0$ and for nontrivial
solutions this expresion is at least somewhere positive. \qed
\end{proof}

The following Lemma is proved by making extensive  use of relation
\eqref{udecomp}.
\begin{lemma}\cite{HenJNMP}\label{equivalence}
Let $(u,v)$ be a solution of system \eqref{sys}, and suppose $u$ is such that
$m=u-u_{xx}$ has compact support. Then, for each fixed time
$0<t<T$, $u$ has compact support if and only if
\begin{equation}\int_\mathbb Re^xm(x)\,dx=\int_\mathbb
Re^{-x}m(x)\,dx=0.  \label{zeroint} \end{equation}  The equivalent
relation holds for the functions $v$ and $n$. \end{lemma}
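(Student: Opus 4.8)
The goal is to prove Lemma~\ref{equivalence}: for a solution $(u,v)$ with $m=u-u_{xx}$ compactly supported, the function $u$ is compactly supported at time $t$ if and only if the two vanishing moment conditions $\int_{\mathbb R}e^x m(x)\,dx = \int_{\mathbb R}e^{-x}m(x)\,dx = 0$ hold.

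<br>

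The plan is to exploit the explicit convolution representation \eqref{udecomp} together with the compactness of the support of $m(\cdot,t)$. Let me fix $t$ and suppose $\mathrm{supp}\,m(\cdot,t)\subseteq[\gamma,\delta]$ for some finite $\gamma<\delta$. For $x>\delta$ the first integral in \eqref{udecomp} runs over all of $\mathbb R$ (i.e. captures the full support) while the second vanishes, so that $u(x,t)=\tfrac12 e^{-x}\int_{\mathbb R}e^y m(y,t)\,dy$; symmetrically, for $x<\gamma$ we get $u(x,t)=\tfrac12 e^{x}\int_{\mathbb R}e^{-y}m(y,t)\,dy$. These are precisely the asymptotic tails isolated in Theorem~\ref{asymptotics}, with $\int_{\mathbb R}e^y m\,dy = E^u_+(t)$ and $\int_{\mathbb R}e^{-y}m\,dy = E^u_-(t)$.

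<br>

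From here both directions are immediate. For the forward direction, if $u(\cdot,t)$ has compact support, then $u(x,t)=0$ for all sufficiently large $|x|$; comparing with the tail formulas forces $E^u_+(t)=\int_{\mathbb R}e^y m\,dy=0$ and $E^u_-(t)=\int_{\mathbb R}e^{-y}m\,dy=0$, which is exactly \eqref{zeroint}. Conversely, if \eqref{zeroint} holds, then the right tail vanishes identically for $x>\delta$ and the left tail vanishes identically for $x<\gamma$; since $m(\cdot,t)$ is supported in $[\gamma,\delta]$, this shows $u(x,t)=0$ outside $[\gamma,\delta]$, so $u$ is compactly supported. The argument for $v$ and $n$ is identical, replacing $\xi$ by $\varphi$ and $m$ by $n$.

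<br>

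I expect no serious obstacle here, since the lemma is essentially a restatement of the tail analysis already carried out in Theorem~\ref{asymptotics}; the only point requiring care is bookkeeping of the support endpoints, ensuring that for $x$ beyond the support one genuinely integrates over the entire support in one term while the complementary term is empty. The cleanest presentation is to split $\mathbb R$ at the (finite) endpoints of $\mathrm{supp}\,m(\cdot,t)$ and read off the exponential tails directly from \eqref{udecomp}, then invoke the fact that a finite linear combination $\tfrac12(E^u_+ e^{-x})$ (respectively $\tfrac12(E^u_- e^{x})$) vanishes on an unbounded set if and only if its coefficient is zero.
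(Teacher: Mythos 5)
Your proof is correct and follows exactly the route the paper indicates: the lemma is quoted from \cite{HenJNMP} with the remark that it is ``proved by making extensive use of relation \eqref{udecomp}'', and your argument --- evaluating the convolution representation \eqref{udecomp} outside the compact support of $m(\cdot,t)$ to read off the pure exponential tails $\tfrac12 E^u_\pm(t)e^{\mp x}$, then noting these vanish on an unbounded set if and only if $E^u_\pm(t)=0$ --- is precisely that proof. Both directions are handled cleanly, so nothing further is needed.
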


We now establish a relation which is satisfied by solutions of
\eqref{sys} whose support remains compact throughout their
evolution. This relation will have profound implications for
solutions $(u,v)$ of \eqref{sys} which have a direct relation to
each other, as we see in Corollary \eqref{c1}.
\begin{theorem}
\label{unotcompact} Let us assume that the functions $u_0,v_0$
have compact support, and let $T>0$ be the maximal existence time
of the solutions $u(x,t),v(x,t)$ which are generated by this
initial data. If, for every $t\in[0,T)$, the function $x\mapsto
\left(u(x,t),v(x,t)\right)$ has compact support, then
\begin{equation} \label{relunot}\int_\mathbb R e^{x} \left(2uv+u_{x}v_x\right)\,dx=\int_\mathbb R e^{-x} \left(2uv+u_{x}v_x\right)\,dx=0 \ \ \mbox{ for } t\in[0,T). \end{equation}
\end{theorem}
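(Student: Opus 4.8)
The plan is to use the conservation-law form \eqref{CH1'} together with the characterization of compact support given in Lemma~\ref{equivalence}. The key observation is that the quantity $L=u+v$ satisfies the conservation law
\[
L_t+\partial_x\left(uv+p\ast\left(2uv+u_xv_x\right)\right)=0,
\]
and that the momenta $m+n$ remain compactly supported by Lemma~\ref{mcompact}. Since $L=p\ast(m+n)$, the hypothesis that $u$ and $v$ both have compact support is equivalent, via Lemma~\ref{equivalence}, to the vanishing of the moment integrals $E_\pm(t)=\int_{\mathbb R}e^{\mp x}(m+n)\,dx\equiv0$ for all $t\in[0,T)$.

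The main idea is then to differentiate these integrals in time and show that the hypothesis forces their $t$-derivatives to vanish as well. First I would recall, exactly as in the proof of Theorem~\ref{Epm}, the computation
\begin{align*}
\frac{\mathrm dE_+(t)}{\mathrm dt}
&=\int_{-\infty}^{\infty}e^y\bigl(m_t(y,t)+n_t(y,t)\bigr)\,dy
=\int_{-\infty}^{\infty}e^y\bigl(2uv+u_xv_x\bigr)\,dy,
\end{align*}
where the integration by parts is justified because $m(\cdot,t),n(\cdot,t)$ have compact support while $u,v$ decay exponentially; and similarly
\[
\frac{\mathrm dE_-(t)}{\mathrm dt}=-\int_{-\infty}^{\infty}e^{-y}\bigl(2uv+u_xv_x\bigr)\,dy.
\]
Crucially, in that earlier proof the sign was controlled by the positivity assumption on $m_0,n_0$, but here I only need the \emph{identities}, not the inequalities. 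Since the compact-support hypothesis gives $E_+(t)\equiv0$ and $E_-(t)\equiv0$ throughout $[0,T)$, their derivatives vanish identically, and reading off the two displayed expressions yields precisely \eqref{relunot}.

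I expect the only delicate point to be the rigorous justification that $E_\pm(t)\equiv0$ really does follow from the assumed compactness of $u(\cdot,t)$ and $v(\cdot,t)$ at \emph{every} time, rather than merely at the initial time. This is exactly the content of Lemma~\ref{equivalence}: for each fixed $t$ with $m(\cdot,t)$ compactly supported, $u(\cdot,t)$ has compact support if and only if $\int e^x m\,dx=\int e^{-x}m\,dx=0$, and analogously for $v$ and $n$. Applying this at each fixed $t$ and adding the two relations gives $E_\pm(t)=0$ for all $t\in[0,T)$. Once this is in hand, the argument is a clean differentiation, and the rest is the routine integration-by-parts already carried out in \eqref{IbyParts}; I would simply cite that computation to avoid repeating it. The forward-looking payoff, as the paper notes, is that combining \eqref{relunot} with the explicit structure of $2uv+u_xv_x$ will constrain which solution pairs $(u,v)$ can possibly retain compact support, which is the substance of the subsequent Corollary~\eqref{c1}.
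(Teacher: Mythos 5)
Your proposal is correct and takes essentially the same route as the paper's own proof: apply Lemma~\ref{equivalence} at each fixed time to get $\int_{\mathbb R} e^{\pm x}(m+n)\,dx\equiv 0$ on $[0,T)$, then differentiate in $t$ and reuse the integration-by-parts computation of Theorem~\ref{Epm} (equation~\eqref{IbyParts}) to identify the vanishing derivatives with the integrals in \eqref{relunot}. The only blemish is the sign slip in your definition $E_\pm(t)=\int_{\mathbb R}e^{\mp x}(m+n)\,dx$, which contradicts both your later use of the weight $e^{y}$ for $E_+$ and the paper's convention \eqref{EuDef}; with $e^{\pm x}$ your argument matches the paper's exactly.
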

\begin{proof}
By the assumptions of this theorem,  Lemma~\ref{equivalence}
applies. Using \eqref{sys} and differentiating the left hand side
of \eqref{zeroint} with respect to $t$ we get
\begin{align*}
\frac{d}{dt}\int_\mathbb R e^x \left(m+n\right)\,dx  = -\int_\mathbb R e^x
\left(2v_xm+vm_x+2u_xn+un_x\right)\,dx \\
=\int_\mathbb R e^x \left(2uv+u_{x}v_x\right)\,dx=0,
 \end{align*}
similarly to the proof of Theorem \ref{Epm}. The final equality
follows from the fact that identity \eqref{zeroint} holds for all
$t\in[0,T)$, by Lemma \ref{equivalence}.

Similarly, we get
\begin{equation}
\frac{d}{dt}\int_\mathbb R e^{-x}
\left(m+n\right)\,dx=-\int_\mathbb R e^{-x}
\left(2uv+u_{x}v_x\right)\,dx=0.
\end{equation}
 Therefore,
\begin{equation}\label{r1}\int_\mathbb R e^{x} \left(2uv+u_{x}v_x\right)\,dx=\int_\mathbb R e^{-x} \left(2uv+u_{x}v_x\right)\,dx=0 \quad t\in[0,T). \end{equation}
The expression under the integral on the right hand side of this
relation must be identically zero by \eqref{zeroint}. This
completes the proof. \qed
\end{proof}
\begin{corollary}\label{c1}
Let us suppose that $u(x,t)=\bar{ v}(x,t)$.  Then the only
solution $(u,v)$ of \eqref{sys} which is compactly supported over
a positive time interval is the trivial solution $u\equiv v\equiv
0$. That is to say, any non-trivial solution $(u,v)$ of
\eqref{sys} which is initially compactly supported instantaneously
loses this property, and so has an infinite propagation speed.
\end{corollary}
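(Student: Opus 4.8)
The plan is to derive the statement directly from Theorem~\ref{unotcompact}, exploiting the sign-definiteness that the reduction $u=\bar v$ forces on the quadratic form appearing there. Suppose, aiming at a contradiction, that a nontrivial solution with $u=\bar v$ retains compact support for every $t$ throughout some positive time interval. Then $(m,n)$ is compactly supported as well, so the hypotheses of Theorem~\ref{unotcompact} are satisfied, and I would invoke its conclusion
\begin{equation*}
\int_{\mathbb R} e^{x}\left(2uv+u_xv_x\right)\,dx=0,
\end{equation*}
valid for each $t$ in that interval.

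Next I would substitute $u=\bar v$, under which the integrand becomes real and pointwise nonnegative, $2uv+u_xv_x=2|u|^2+|u_x|^2\ge 0$, exactly as already observed in the $u=\bar v$ case. Since $e^{x}>0$, the whole integrand is nonnegative, and discarding the $|u_x|^2$ term yields the sandwich bound
\begin{equation*}
0\le 2\int_{\mathbb R} e^{x}|u(x,t)|^2\,dx\le \int_{\mathbb R} e^{x}\left(2|u|^2+|u_x|^2\right)\,dx=0.
\end{equation*}
Because $u=p\ast m$ is continuous, the nonnegative continuous function $e^{x}|u(x,t)|^2$ has vanishing integral and therefore vanishes identically, so $u(x,t)\equiv 0$, and hence $v=\bar u\equiv 0$, for every $t$ in the interval.

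This contradicts the assumed nontriviality, establishing that the only $u=\bar v$ solution staying compactly supported over a positive time interval is the trivial one; equivalently, any nontrivial initially compactly supported datum loses compactness instantaneously and so propagates with infinite speed. I expect no real obstacle here, since the heart of the matter is the elementary sign argument above. The one point deserving care is the passage from a vanishing integral to an identically vanishing integrand, which I would justify using only the continuity of $u=p\ast m$, so that no regularity of $u_x$ is required; it is also worth confirming that Theorem~\ref{unotcompact} may be applied on the given positive subinterval rather than on all of $[0,T)$, which is legitimate because its proof relies only on compact support holding throughout the interval in question.
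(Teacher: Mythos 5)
Your proof is correct and is essentially the paper's own argument: the paper disposes of this corollary by simply citing the relations \eqref{r1} from Theorem~\ref{unotcompact}, with the sign-definiteness $2uv+u_xv_x=2|u|^2+|u_x|^2\ge 0$ under $u=\bar v$ already noted in the preceding subsection. You merely make explicit what the paper leaves implicit --- the passage from the vanishing weighted integral to $u\equiv 0$ via continuity of $u=p\ast m$, and the legitimacy of applying Theorem~\ref{unotcompact} on the given positive time interval --- both of which are sound.
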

\begin{proof}
The statement follows directly from relations in \eqref{r1}. \qed
\end{proof}
%\subsubsection{Other relations...}
%If we have
%\[
%u=w+e, \qquad v=w-e,
%\]
%then
%\[
%\int_\mathbb R e^x \left(2uv+u_{x}v_x\right)\,dx
%\] becomes
%\begin{align*}
%\int_\mathbb R e^x \left(2(w+e)(w-e)+(w+e)_x(w-e)_x\right)\,dx
%\\ =\int_\mathbb R e^x \left(2w^2-2e^2+w_x^2-e^2_x\right)\,dx
%\end{align*}
%and so the relations in \eqref{r1} imply that, if $(u,v)$ is
%compactly supported for all times $t\in[0,T)$, then
%\begin{align*}
%\int_\mathbb R e^x (w^2+\frac 12 w_x^2)\,dx =\int_\mathbb R e^x (e^2+\frac 12 e_x^2)\,dx, \\
%\int_\mathbb R e^{-x} (w^2+\frac 12 w_x^2)\,dx =\int_\mathbb R
%e^{-x} (e^2+\frac 12 e_x^2)\,dx,
%\end{align*}
%therefore the terms $w$ and $e$ must have equal weighted energy
%norms.

\subsection{Global solutions for nonnegative $m_0,n_0$}

From (\ref{udecomp}) and (\ref{uxdecomp}) it follows that
\begin{equation} u(x,t)+u_x(x,t)= e^x\int_x^{\infty}e^{-y}m(y,t)\,dy. \label{uplusuxdecomp}
\end{equation}

Thus the nonnegativity of $m(x,t)$, $n(x,t)$,  ensures $ u_x
(x,t)\ge -u(x,t)$ and similarly  $v_x(x,t)\ge -v(x,t)$, preventing
blowup in finite time, because the solution $(u,v)$ is uniformly
bounded as long as it exists.

Blowup however might be possible if $m(x,0)$, $n(x,0)$ take both
positive and negative values.

%**** It might be easier to work with $L(x,t)\equiv n(x,t)+m(x,t)$.
% If $L$ breaks then either $u$ breaks or $v$ or both.

\section{Conclusions}

In the presented study we analysed the behavior of the solutions
of the CCCH system when $m,n$ are initially compactly supported
and (i) initially $u,v$ everywhere nonpositive/nonnegative (ii)
$u=\bar{v}$. In both cases the result is that the compactness
property is lost immediately, i.e. for any time $t>0$.
Asymptotically the solutions decay exponentially to zero, such
that $u+v$ decays to zero monotonically. The exponential decay is
already observed in the case of the peakon solutions, where $m,n$
are supported only at finite number of points.

\section{Acknowledgments}
We are grateful to our friend and colleague James Percival for
providing us the figure.  The work of RII is supported by the
Science Foundation Ireland (SFI), under Grant No. 09/RFP/MTH2144.
The work by DDH was partially supported by Advanced Grant 267382 FCCA from the European Research Council.

\label{lastpage}

\end{document}